\documentclass[12pt]{article}


\textwidth=16cm
\textheight=21cm
\parindent=16pt
\oddsidemargin=2mm
\evensidemargin=2mm
\topmargin=0mm

\usepackage{amssymb}
\usepackage{url}
\usepackage{amscd}
\usepackage{amsthm}
\usepackage{amsmath}
\usepackage{graphicx,tikz}

\newtheorem {Theorem}  {Theorem}

\newtheorem {Definition} {Definition}

\begin{document}
\baselineskip = 15pt
\bibliographystyle{plain}

\title{Friends-and-strangers is \textsf{PSPACE}-complete}
\date{}
\author{Chao Yang\\ 
              School of Mathematics and Statistics\\
              Guangdong University of Foreign Studies, Guangzhou, 510006, China\\
              sokoban2007@163.com, yangchao@gdufs.edu.cn\\
              \\
        Zhujun Zhang\\
              Government Data Management Center of\\
              Fengxian District, Shanghai, 201499, China\\
              zhangzhujun1988@163.com\\
              }

\maketitle

\begin{abstract}
In this paper, we show that the friends-and-strangers problem is \textsf{PSPACE}-complete by reduction from the \textsc{Ncl} (non-deterministic constraint logic) problem.
\end{abstract}

\noindent{\textbf{Keywords}}:
Friends-and-strangers graph, computational complexity, \textsf{PSPACE}-complete, non-deterministic constraint logic\\
MSC2020: 05C40, 68Q17

\section{Introduction} 

Recently, the friends-and-strangers problem was introduced by Defant and Kravitz \cite{dk21}. Given two finite simple graphs $X$ and $Y$ with the same order $n$. The vertex set $V(X)$ of graph $X$ is a set of locations, and $x_1x_2\in E(X)$ means the two locations $x_1$ and $x_2$ are adjacent. The vertex set $V(Y)$ of graph $Y$ is a set of people, and $y_1y_2\in E(Y)$ means two people $y_1$ and $y_2$ are friends while $y_1y_2\not\in E(Y)$ means they are strangers. A bijection $\sigma: X \rightarrow Y$ is called a \textit{configuration}, which is an arrangement of the people in $V(Y)$ on locations in $V(X)$. A new configuration $\sigma'$ can be obtained from a configuration $\sigma$ by a \textit{swap} if and only the two bijections $\sigma'$ and $\sigma$ differ at exactly two points $x_1, x_2 \in X$, where $x_1$ and $x_2$ are adjacent in $X$, and $\sigma(x_1)$ and $\sigma(x_2)$ are adjacent in $Y$. In other words, $\sigma'(x_1)=\sigma(x_2)$, $\sigma'(x_2)=\sigma(x_1)$, and $\sigma'(x)=\sigma(x)$ for all $x\in X - \{x_1, x_2\}$. Intuitively speaking, two people can swap their locations if and only if the two locations are adjacent and the two people are friends. The \textit{friends-and-strangers graph}, denoted by $\textsf{FS}(X,Y)$, is the graph whose vertex set is the set of all $n!$ configurations, and two configurations are adjacent if and only if they differ by a swap. 

The friends-and-strangers model is a unification and generalization of many problems, such as sliding blocks on graphs \cite{w74,y11}, and swapping tokens on graphs \cite{yama15,yama18}, hence this model received much attention. The recent study on friends-and-strangers problem focuses on the connectedness of the graph $\textsf{FS}(X,Y)$ from the perspectives of random graph theory and extremal graph theory \cite{a23,b22,j23}, and for special families of the graphs $X$ and $Y$ \cite{wy23}. So we define the following \textit{configuration-to-configuration} version of the decision problem related to the connectedness of friends-and-strangers graph, which will be denoted by \textsc{Fas}.

\begin{Definition}[The \textsc{Fas} Problem] Given two finite simple graphs $X$ and $Y$ of order $n$, and two configurations $\sigma$ and $\sigma'$, is there a path from $\sigma$ to $\sigma'$ in $\textsf{FS}(X,Y)$?
\end{Definition}

The main contribution of this paper is the following theorem on the computational complexity of \textsc{Fas}.

\begin{Theorem}\label{thm_main}
The \textsc{Fas} problem is $\textsf{PSPACE}$-complete, even if the location graph $X$ is a planar graph with maximum degree $3$.
\end{Theorem}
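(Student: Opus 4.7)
The plan is to prove \textsf{PSPACE} membership by a standard nondeterministic argument and \textsf{PSPACE}-hardness by a polynomial-time reduction from the configuration-to-configuration version of non-deterministic constraint logic (\textsc{Ncl}) restricted to planar graphs of maximum degree $3$ with only AND and PROTECTED OR vertices, which is known to be \textsf{PSPACE}-complete by the results of Hearn and Demaine.

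For membership, a configuration $\sigma$ is a bijection $V(X)\to V(Y)$ storable in $O(n\log n)$ bits. A nondeterministic machine guesses, at each step, an edge $x_1x_2\in E(X)$ with $\sigma(x_1)\sigma(x_2)\in E(Y)$, performs the corresponding swap, and halts when the current configuration equals $\sigma'$. Each step uses only polynomial space, so \textsc{Fas} lies in \textsf{NPSPACE}, which equals \textsf{PSPACE} by Savitch's theorem.

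For hardness, I would build a polynomial-time reduction that, given an \textsc{Ncl} instance $(G,C_0,C_1)$, outputs graphs $X$ and $Y$ together with configurations $\sigma_0,\sigma_1$ such that $X$ is planar of maximum degree $3$, and $\sigma_0\to\sigma_1$ is realizable in $\textsf{FS}(X,Y)$ if and only if $C_0\to C_1$ is a legal \textsc{Ncl} reconfiguration. The location graph $X$ is assembled from three kinds of gadgets, glued along the planar embedding of $G$: an \emph{edge gadget} for each \textsc{Ncl} edge (in two versions, for weight-$1$ and weight-$2$ edges) whose internal token arrangement encodes the orientation; an \emph{AND gadget} at each AND vertex; and an \emph{OR gadget} at each OR vertex. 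The interior vertices of each gadget in $X$ carry dedicated people (vertices of $Y$) whose friendship pattern in $Y$ allows only a short list of local rotations, while a few interface people are shared between neighboring gadgets and carry the signal that an edge has been flipped. The friendship graph $Y$ is designed so that an interface swap between two gadgets is possible only when both gadgets are in states corresponding to the \textsc{Ncl} constraint being satisfied both before and after the move.

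The central difficulty will be the simultaneous design of the AND and OR gadgets, which must (i) admit a realization of every legal \textsc{Ncl} move by a polynomial-length sequence of swaps, (ii) forbid any swap sequence that would correspond to a constraint violation, and (iii) embed planarly with maximum degree $3$. I would first construct the edge gadget as a small cycle carrying a prescribed multiset of people whose two rotational states realize the two orientations of the edge; then an OR gadget as a common hub of the three incident edge gadgets in which any single ``inward'' signal suffices to permit the required rotation; and finally an AND gadget using an auxiliary ``counter'' subgraph that blocks the weight-$2$ edge from flipping outward unless both weight-$1$ edges are latched inward. The AND gadget is the delicate one because of its asymmetry and because it must respect both the planar embedding and the degree bound at a vertex of $G$. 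Correctness then reduces to (a) a local case analysis inside each gadget classifying all reachable token configurations, and (b) an interface analysis showing that the only cross-gadget swaps changing the global state correspond to constraint-respecting \textsc{Ncl} moves. Combined with the \textsf{PSPACE} upper bound, this establishes the theorem.
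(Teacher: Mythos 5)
Your overall strategy coincides with the paper's: membership via a nondeterministic swap-guessing procedure and $\textsf{NPSPACE}=\textsf{PSPACE}$, and hardness via a polynomial-time reduction from configuration-to-configuration \textsc{Ncl} on planar graphs, implemented by edge, AND, and OR gadgets glued along the planar embedding so that $X$ is planar with maximum degree $3$. The membership half is essentially complete (the paper additionally bounds the walk length by $n!$ with an $O(n\log n)$-bit counter so that the procedure terminates, a detail you should add, but this is minor).

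The hardness half, however, has a genuine gap: you describe what the gadgets must accomplish, but you do not construct any of them, and you yourself flag the "simultaneous design of the AND and OR gadgets" as the central difficulty still to be resolved. That design is the entire technical content of the proof, and it is not routine --- one must exhibit a concrete friendship graph $Y$ and concrete location fragments such that (i) in the intended configurations no unintended swap is available anywhere, (ii) a token released at one end of an edge gadget forces a token to be captured at the other end (so that orientation is faithfully encoded and cannot be "half-flipped" in a way that cheats the constraint), and (iii) the vertex gadgets meter exactly how many incident edges may point outward. The paper achieves this with eight classes of people whose friendship pattern is a blow-up of a small fixed graph, an edge gadget in which an incoming blue (or red) person is locked at an internal location, thereby freeing a green person whose traversal in turn releases a blue (or red) at the far end, an OR gadget containing exactly two free blue people shared among three incident edge gadgets, and an AND gadget that can release either its one blue or its two reds but not both. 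Your alternative sketches (edge gadgets as rotating cycles, an auxiliary "counter" subgraph for AND) are plausible starting points but are unverified; without explicit gadgets and the accompanying local case analysis showing that no swap sequence can violate the \textsc{Ncl} constraints, the reduction --- and hence the theorem --- is not established.
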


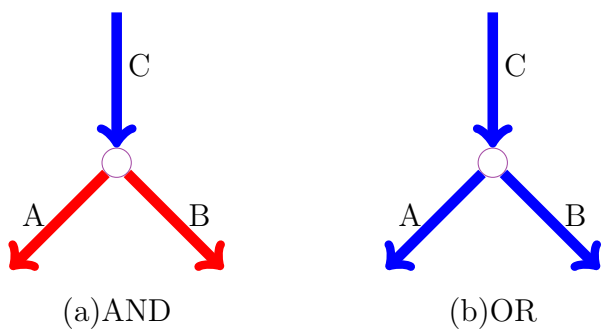
\begin{figure}[h]
\begin{center}
\begin{tikzpicture}
\node (v1) at ( 0,0) [circle,draw=violet!70] {};
\draw  [<-, line width=4, blue] (v1) -- (0,2) ;
\draw  [->, line width=4, red] (v1) -- (-1.414,-1.414) ; 
\draw  [->, line width=4, red] (v1) -- (1.414,-1.414) ; 
\node at (0,-2) {(a)AND};
\node at (0.3,1.3) {C};
\node at (-1.1,-0.7) {A};
\node at (1.1,-0.7) {B};

\node (v1) at ( 5,0) [circle,draw=violet!70] {};
\draw  [<-, line width=4, blue] (v1) -- (5,2) ;
\draw  [->, line width=4, blue] (v1) -- (5-1.414,-1.414) ;
\draw  [->, line width=4, blue] (v1) -- (6.414,-1.414) ;
\node at (5,-2) {(b)OR};
\node at (5+0.3,1.3) {C};
\node at (5-1.1,-0.7) {A};
\node at (5+1.1,-0.7) {B};

\end{tikzpicture}
\end{center}
\caption{\textsc{Ncl} (a) an AND vertex, (b) an OR vertex}\label{fig_ncl_vertex}
\end{figure}

We will prove Theorem \ref{thm_main} by reduction from the \textsc{Ncl} problem introduced by Hearn and Demaine in \cite{hd05}, which has been used in proving the $\textsf{PSPACE}$-completeness of several other problems \cite{bb12,c23,hly17}. \textsc{Ncl} is short for \textit{non-derministic constraint logic}, which is in fact a decision problem defined on $3$-regular directed graphs called \textsc{Ncl} graphs. Each edge of an \textsc{Ncl} graph has a weight of $1$ or $2$ (illustrated by red or blue edges respectively in Figure \ref{fig_ncl_vertex}). Moreover, there are only two kinds of vertices, the AND vertices and the OR vertices, in the \textsc{Ncl} graph as illustrated in Figure \ref{fig_ncl_vertex} up to the direction of their incident edges (i.e. the direction of the edges may vary). There is yet a constraint in the \textsc{Ncl} graph that the sum of the weights of incoming edges of each vertex must be at least $2$. With this constraint, the vertices kind of simulate the AND and OR logic gates. A configuration of the \textsc{Ncl} graph is an orientation of the edges satisfying the constraints. An edge can change its direction (called \textit{an edge flip}) if the configurations before and after the change both satisfy the constraints.

\begin{Definition}[\cite{hd05}, The \textit{configuration-to-configuration} \textsc{Ncl} Problem] Given two configurations $f$ and $g$ of an \textsc{Ncl} graph, can $g$ be obtained from $f$ by a sequence of edge flips (all intermediate configurations should satisfy the constraint)?
\end{Definition}

In \cite{hd05}, the following theorem is proved.

\begin{Theorem}[\cite{hd05}]
The \textsc{Ncl} problem is \textsf{PSPACE}-complete, even for planar \textsc{Ncl} graphs.
\end{Theorem}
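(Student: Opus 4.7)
The plan is to establish both containment in PSPACE and PSPACE-hardness for planar NCL graphs. Containment is the easier half: a configuration of an NCL graph is just an orientation of its edges, hence encodable in $O(|E|)$ bits, and the legality of each edge flip (the incoming weight of each endpoint remaining at least $2$) is polynomial-time checkable. A nondeterministic procedure that stores only the current configuration and guesses the next flip uses polynomial space, so by Savitch's theorem the configuration-to-configuration reachability problem lies in \textsf{PSPACE}.

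For hardness I would reduce from Quantified Boolean Formula (QBF), a canonical \textsf{PSPACE}-complete problem. The weight constraint makes the gadget semantics transparent: at an AND vertex (a blue edge of weight $2$ and two red edges of weight $1$), the blue edge can flip to point outward only when both red edges point inward, mimicking logical AND; at an OR vertex (three blue edges), one edge can flip outward provided at least one of the other two points inward, mimicking logical OR. Given a QBF instance, I would build (i) variable gadgets, each with a distinguished edge whose orientation encodes a truth value; (ii) AND-based fan-out trees that propagate a variable's value to every occurrence; (iii) a circuit of AND/OR vertices computing the quantifier-free matrix; and (iv) quantifier gadgets that force a strict alternation in which variable edges may be flipped, mirroring the structure $\forall x_1 \exists x_2 \forall x_3 \cdots$. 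The target configuration is chosen so that it is reachable from the initial one if and only if the QBF evaluates to true.

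The final step enforces planarity. The reduction just described generically produces wire crossings in the NCL graph, so I would design a \emph{crossover gadget}: a constant-size NCL subgraph with four distinguished boundary edges, such that the pair of orientations on one diagonal can be toggled independently of the pair on the other diagonal, with no other observable effect. Substituting such a gadget at each crossing in a planar embedding of the (non-planar) reduction yields a planar NCL instance of polynomial size that preserves reachability, completing the reduction.

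The main obstacle is proving correctness of the crossover gadget (and, to a lesser extent, of the quantifier gadgets). One must exhaustively verify that the set of orientations of the gadget reachable from a given boundary state decomposes exactly as an independent product of the two wire signals, and that no unintended flip sequence lets information leak between the two wires or circumvents the enforced quantifier ordering. This careful local-to-global analysis---showing that a fixed small gadget behaves identically in every global context in which it is embedded---is the technical heart of the Hearn--Demaine proof, and would be the part of the argument I would expect to occupy most of the write-up.
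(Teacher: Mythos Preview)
The paper does not prove this theorem at all: it is stated as a result of Hearn and Demaine \cite{hd05} and used as a black box for the reduction that establishes Theorem~\ref{thm_main}. So there is no ``paper's own proof'' to compare against; the intended content here is simply a citation.

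Your proposal is a reasonable high-level sketch of the original Hearn--Demaine argument (Savitch for containment, a QBF reduction built from AND/OR vertices plus quantifier machinery, and a crossover gadget to restore planarity), but it is outside the scope of what this paper attempts. If you were to include such a write-up here it would be misplaced: the correct treatment in this context is to cite the result and move on.
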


\section{Proof of Theorem \ref{thm_main}}

\begin{proof}[Proof of Theorem \ref{thm_main}] We first show that \textsc{Fas} is in \textsf{PSPACE}. Given an instance of the \textsc{Fas} problem with graphs $X$ and $Y$ of order $n$, and two configurations $\sigma$ and $\sigma'$. If there is a path from $\sigma$ to $\sigma'$ in $\textsf{FS}(X,Y)$, then the path must be of length less than $n!$, which is the total number of different configurations. So an algorithm can try all possible walks starting from $\sigma$ non-deterministically. The algorithm either finds a walk to $\sigma'$, or returns a negative answer if all walks of length at most $n!$ have been exhausted. To keep track of the length of the walk, we need a counter of at most
$$\log_2 n!=\sum_{i=1}^n \log_2 i \leq n\log_2 n$$
binary bits. This implies that  \textsc{Fas}$\in$\textsf{NPSPACE}$=$\textsf{PSPACE}.


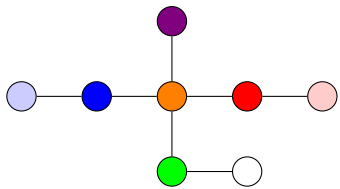
\begin{figure}[h]
\begin{center}
\begin{tikzpicture}[scale=0.5]

\node (lr) at (6,0) [circle,draw,fill=red!20] {};
\node (r) at (4,0) [circle,draw,fill=red] {};

\node (lb) at (-2,0) [circle,draw,fill=blue!20] {};
\node (b) at (0,0) [circle,draw,fill=blue] {};

\node (o) at (2,0) [circle,draw,fill=orange] {};
\node (violet) at (2,2) [circle,draw,fill=violet] {};

\node (g) at (2,-2) [circle,draw,fill=green] {};
\node (w) at (4,-2) [circle,draw] {};

\draw (lb) -- (b) -- (o)--(violet);
\draw (lr) -- (r) -- (o); \draw (w) -- (g) -- (o);

\end{tikzpicture}
\end{center}
\caption{Friends and strangers relations} \label{fig_fs_relation}
\end{figure}

Next, we show that \textsc{Fas} is \textsf{PSPACE}-hard by reduction from configuration-to-configuration \textsc{Ncl} problem. We will show how to construct a configuration of the \textsc{Fas} problem to simulate a configuration of the \textsc{Ncl} problem. To this end, we just need to construct an edge gadget (resp. a vertex gadget) in \textsc{Fas} for each edge (resp. vertex) in the \textsc{Ncl} graph. We have eight kinds of people in our construction, represented by eight colors: blue, light blue, red, light red, green, white, orange and violet. People of the same color are friends. Two people with different colors are friends if and only if there is an edge connecting these two colors in the graph shown in Figure \ref{fig_fs_relation}. Note that the graph $Y$ is a blown-up graph of this graph, and thus has a relatively high connectivity.

\begin{figure}[h]
\begin{center}
\begin{tikzpicture}[scale=0.5]

\draw  [<-, line width=6, blue] (6,4) -- (16,4) ;

\node (v0) at (4,2) [circle,draw,fill=green] {};
\node (v1) at (6,2) [circle,draw,fill=violet] {};
\node (v2) at (8,2) [circle,draw] {};
\node (v3) at (10,2) [circle,draw] {};
\node (v4) at (12,2) [circle,draw] {};
\node (v5) at (14,2) [circle,draw] {};
\node (v6) at (16,2) [circle,draw] {};
\node (v7) at (18,2) [circle,draw,fill=orange] {};

\node (v8) at (0,0) [circle,draw,fill=blue!20] {};
\node (v9) at (2,0) [circle,draw,fill=blue!20] {};
\node (v10) at (4,0) [circle,draw,fill=blue!20] {};
\node (v11) at (6,0) [circle,draw,fill=blue!20] {};
\node (v12) at (8,0) [circle,draw,fill=orange] {};

\node (v13) at (14,0) [circle,draw,fill=blue] {};
\node (v14) at (16,0) [circle,draw,fill=violet] {};
\node (v15) at (18,0) [circle,draw,fill=blue!20] {};
\node (v16) at (20,0) [circle,draw,fill=blue!20] {};
\node (v17) at (22,0) [circle,draw,fill=blue!20] {};

\draw (v1) -- (v0);
\draw (v2) -- (v1);
\draw (v3) -- (v2);
\draw (v4) -- (v3);
\draw (v5) -- (v4);
\draw (v6) -- (v5);
\draw (v7) -- (v6);

\draw (v11) -- (v1);

\draw (v9) -- (v8);
\draw (v10) -- (v9);
\draw (v11) -- (v10);
\draw (v12) -- (v11);

\draw (v14) -- (v6);

\draw (v14) -- (v13);
\draw (v15) -- (v14);
\draw (v16) -- (v15);
\draw (v17) -- (v16);

\node (r) at (24,0) {}; \node (l) at (-2,0) {};
\draw (v17)--(r); \draw (l)--(v8);

\node at (8.8,0) {$\alpha$}; \node at (18.8,2) {$\beta$}; \node at (13.2,0) {$\gamma$};

\end{tikzpicture}
\end{center}
\caption{Blue edge gadget (blue edge of NCL, with weight $2$)} \label{fig_edge_blue}
\end{figure}
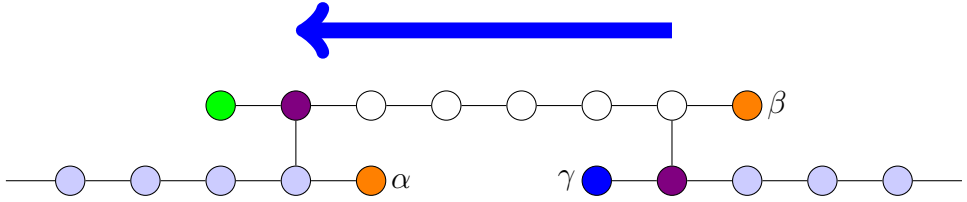

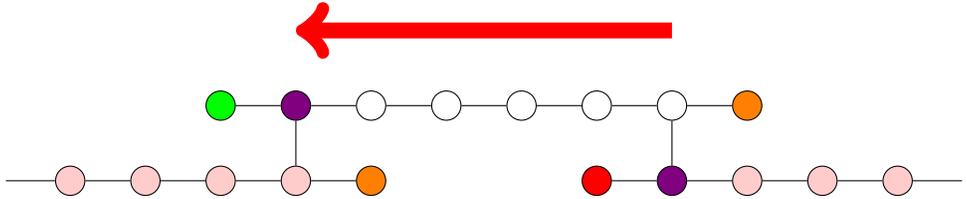
\begin{figure}[h]
\begin{center}
\begin{tikzpicture}[scale=0.5]

\draw  [<-, line width=6,  red] (6,4) -- (16,4) ;

\node (v0) at (4,2) [circle,draw,fill=green] {};
\node (v1) at (6,2) [circle,draw,fill=violet] {};
\node (v2) at (8,2) [circle,draw] {};
\node (v3) at (10,2) [circle,draw] {};
\node (v4) at (12,2) [circle,draw] {};
\node (v5) at (14,2) [circle,draw] {};
\node (v6) at (16,2) [circle,draw] {};
\node (v7) at (18,2) [circle,draw,fill=orange] {};

\node (v8) at (0,0) [circle,draw,fill=red!20] {};
\node (v9) at (2,0) [circle,draw,fill=red!20] {};
\node (v10) at (4,0) [circle,draw,fill=red!20] {};
\node (v11) at (6,0) [circle,draw,fill=red!20] {};
\node (v12) at (8,0) [circle,draw,fill=orange] {};

\node (v13) at (14,0) [circle,draw,fill=red] {};
\node (v14) at (16,0) [circle,draw,fill=violet] {};
\node (v15) at (18,0) [circle,draw,fill=red!20] {};
\node (v16) at (20,0) [circle,draw,fill=red!20] {};
\node (v17) at (22,0) [circle,draw,fill=red!20] {};

\draw (v1) -- (v0);
\draw (v2) -- (v1);
\draw (v3) -- (v2);
\draw (v4) -- (v3);
\draw (v5) -- (v4);
\draw (v6) -- (v5);
\draw (v7) -- (v6);

\draw (v11) -- (v1);

\draw (v9) -- (v8);
\draw (v10) -- (v9);
\draw (v11) -- (v10);
\draw (v12) -- (v11);

\draw (v14) -- (v6);

\draw (v14) -- (v13);
\draw (v15) -- (v14);
\draw (v16) -- (v15);
\draw (v17) -- (v16);

\node (r) at (24,0) {}; \node (l) at (-2,0) {};
\draw (v17)--(r); \draw (l)--(v8);

\end{tikzpicture}
\end{center}
\caption{Red edge gadget (red edge of NCL, with weight $1$)} \label{fig_edge_red}
\end{figure}

A blue edge gadget is illustrated in Figure \ref{fig_edge_blue}, which is a fragment of the location graph $X$ with its mapping to $Y$. The two open-ended edges on the left and on the right are to be connected to the vertex gadgets that will be introduced later. Note that at its current state as shown in Figure \ref{fig_edge_blue}, no people can swap with its neighbors, except those people of the same color (swapping two people of the same color does not make any difference at all). But if a blue people is coming from outside through the open-ended edge on the left (in fact the blue is coming from a vertex gadget as we will see soon), he can go all the way to the right to the location $\alpha$, by swapping with the light blue and the orange people. The orange then swaps location with the violet, and sets the green free, because the green is a friend of orange but not violet. Note that the incoming blue is now locked at the location $\alpha$. All the green can do is to go all the way to the right to the location $\beta$, by a sequence of swapping. The orange originally at the location $\beta$ is now a neighbor of the violet, so it swaps location with the violet. Now the blue at location $\gamma$ is set free, and can leave the edge gadget by going to the right. In short, the blue edge gadget can lock up a blue on one side, and set a blue free on the other side, it simulates the direction of an edge of \textsc{Ncl} graphs, pointing to the side that a blue has been released. A red edge gadget is almost the same as the blue edge gadget, except that the blue and light blue people are replaced by red and light red, respectively (see Figure \ref{fig_edge_red}).


\begin{figure}[h]
\begin{center}
\begin{tikzpicture}[scale=0.5]

\node (v0) at (4,2) [circle,draw,fill=green] {};
\node (v1) at (6,2) [circle,draw,fill=violet] {};
\node (v2) at (8,2) [circle,draw] {};
\node (v3) at (10,2) [circle,draw] {};
\node (v4) at (12,2) [circle,draw] {};
\node (v5) at (14,2) [circle,draw] {};
\node (v6) at (16,2) [circle,draw] {};
\node (v7) at (18,2) [circle,draw,fill=orange] {};

\node (v8) at (0,0) [circle,draw,fill=blue!20] {};
\node (v9) at (2,0) [circle,draw,fill=blue!20] {};
\node (v10) at (4,0) [circle,draw,fill=blue!20] {};
\node (v11) at (6,0) [circle,draw,fill=blue!20] {};
\node (v12) at (8,0) [circle,draw,fill=orange] {};

\node (v13) at (14,0) [circle,draw,fill=blue] {};
\node (v14) at (16,0) [circle,draw,fill=violet] {};
\node (v15) at (18,0) [circle,draw,fill=blue!20] {};
\node (v16) at (20,0) [circle,draw,fill=blue!20] {};
\node (v17) at (22,0) [circle,draw,fill=blue!20] {};

\draw (v1) -- (v0);
\draw (v2) -- (v1);
\draw (v3) -- (v2);
\draw (v4) -- (v3);
\draw (v5) -- (v4);
\draw (v6) -- (v5);
\draw (v7) -- (v6);

\draw (v11) -- (v1);

\draw (v9) -- (v8);
\draw (v10) -- (v9);
\draw (v11) -- (v10);
\draw (v12) -- (v11);

\draw (v14) -- (v6);

\draw (v14) -- (v13);
\draw (v15) -- (v14);
\draw (v16) -- (v15);
\draw (v17) -- (v16);

\node (w0) at (4,6) [circle,draw,fill=green] {};
\node (w1) at (6,6) [circle,draw,fill=violet] {};
\node (w2) at (8,6) [circle,draw] {};
\node (w3) at (10,6) [circle,draw] {};
\node (w4) at (12,6) [circle,draw] {};
\node (w5) at (14,6) [circle,draw] {};
\node (w6) at (16,6) [circle,draw] {};
\node (w7) at (18,6) [circle,draw,fill=orange] {};

\node (w8) at (0,4) [circle,draw,fill=blue!20] {};
\node (w9) at (2,4) [circle,draw,fill=blue!20] {};
\node (w10) at (4,4) [circle,draw,fill=blue!20] {};
\node (w11) at (6,4) [circle,draw,fill=blue!20] {};
\node (w12) at (8,4) [circle,draw,fill=orange] {};

\node (w13) at (14,4) [circle,draw,fill=blue] {};
\node (w14) at (16,4) [circle,draw,fill=violet] {};
\node (w15) at (18,4) [circle,draw,fill=blue!20] {};
\node (w16) at (20,4) [circle,draw,fill=blue!20] {};
\node (w17) at (22,4) [circle,draw,fill=blue!20] {};

\draw (w1) -- (w0);
\draw (w2) -- (w1);
\draw (w3) -- (w2);
\draw (w4) -- (w3);
\draw (w5) -- (w4);
\draw (w6) -- (w5);
\draw (w7) -- (w6);

\draw (w11) -- (w1);

\draw (w9) -- (w8);
\draw (w10) -- (w9);
\draw (w11) -- (w10);
\draw (w12) -- (w11);

\draw (w14) -- (w6);

\draw (w14) -- (w13);
\draw (w15) -- (w14);
\draw (w16) -- (w15);
\draw (w17) -- (w16);

\node (u0) at (4,10) [circle,draw,fill=green] {};
\node (u1) at (6,10) [circle,draw,fill=violet] {};
\node (u2) at (8,10) [circle,draw] {};
\node (u3) at (10,10) [circle,draw] {};
\node (u4) at (12,10) [circle,draw] {};
\node (u5) at (14,10) [circle,draw] {};
\node (u6) at (16,10) [circle,draw] {};
\node (u7) at (18,10) [circle,draw,fill=orange] {};

\node (u8) at (0,8) [circle,draw,fill=blue!20] {};
\node (u9) at (2,8) [circle,draw,fill=blue!20] {};
\node (u10) at (4,8) [circle,draw,fill=blue!20] {};
\node (u11) at (6,8) [circle,draw,fill=blue!20] {};
\node (u12) at (8,8) [circle,draw,fill=orange] {};

\node (u13) at (14,8) [circle,draw,fill=blue] {};
\node (u14) at (16,8) [circle,draw,fill=violet] {};
\node (u15) at (18,8) [circle,draw,fill=blue!20] {};
\node (u16) at (20,8) [circle,draw,fill=blue!20] {};
\node (u17) at (22,8) [circle,draw,fill=blue!20] {};

\draw (u1) -- (u0);
\draw (u2) -- (u1);
\draw (u3) -- (u2);
\draw (u4) -- (u3);
\draw (u5) -- (u4);
\draw (u6) -- (u5);
\draw (u7) -- (u6);

\draw (u11) -- (u1);

\draw (u9) -- (u8);
\draw (u10) -- (u9);
\draw (u11) -- (u10);
\draw (u12) -- (u11);

\draw (u14) -- (u6);

\draw (u14) -- (u13);
\draw (u15) -- (u14);
\draw (u16) -- (u15);
\draw (u17) -- (u16);

\node (o1) at (-2,2) [circle,draw,fill=blue] {};
\node (o2) at (-2,6) [circle,draw,fill=blue] {};
\node (m) at (-2,4) [circle,draw,fill=blue!20] {};
\node (m1) at (-2,0) [circle,draw,fill=blue!20] {};
\node (m2) at (-2,8) [circle,draw,fill=blue!20] {};

\draw (m2) -- (u8);\draw (m2) -- (o2);
\draw (m1) -- (v8); \draw (m1) -- (o1);
\draw (m) -- (w8);
\draw (m) -- (o1); \draw (m) -- (o2);

\draw [dashed, ultra thick] (-3,-1)--(-1,-1)--(-1,9)--(-3,9)--(-3,-1);

\end{tikzpicture}
\end{center}
\caption{The OR vertex gadget} \label{fig_or}
\end{figure}
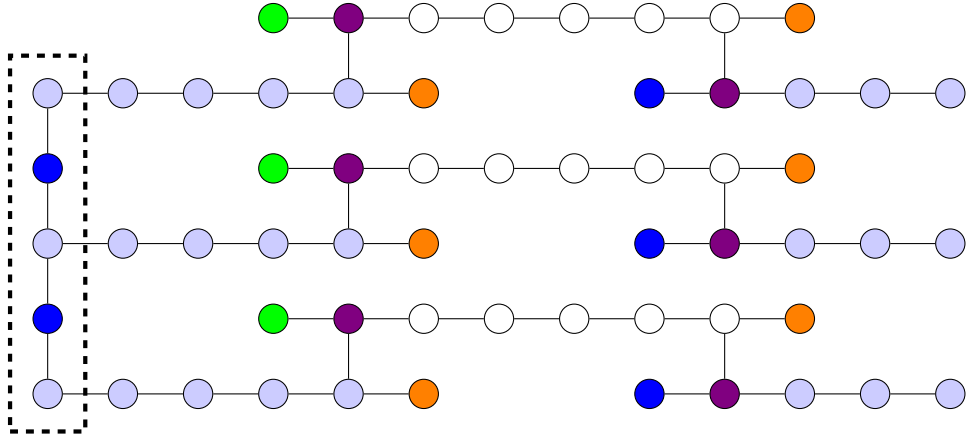

The OR vertex gadget is illustrated by the fragment enclosed by the dashed rectangle in Figure \ref{fig_or}, and is connected to three blue edge gadgets. All three blue edge gadgets are pointing to the OR vertex gadget at the moment. But there are only two free blue inside the OR vertex gadget, so at any moment, at most two edges can change their orientation from pointing inwards to pointing outwards. This simulates the logic of OR vertex in \textsc{Ncl} graphs.


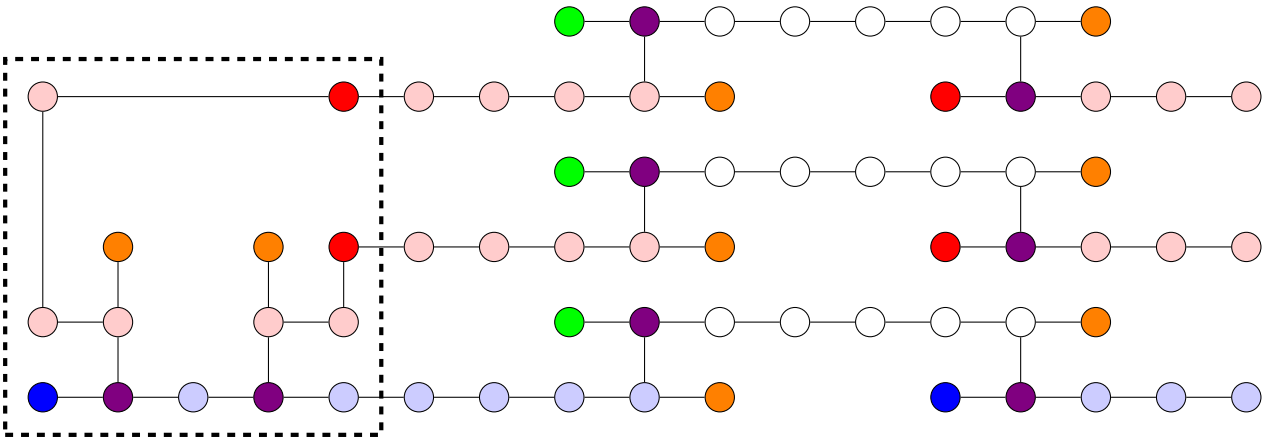
\begin{figure}[h]
\begin{center}
\begin{tikzpicture}[scale=0.5]

\node (v0) at (4,2) [circle,draw,fill=green] {};
\node (v1) at (6,2) [circle,draw,fill=violet] {};
\node (v2) at (8,2) [circle,draw] {};
\node (v3) at (10,2) [circle,draw] {};
\node (v4) at (12,2) [circle,draw] {};
\node (v5) at (14,2) [circle,draw] {};
\node (v6) at (16,2) [circle,draw] {};
\node (v7) at (18,2) [circle,draw,fill=orange] {};

\node (v8) at (0,0) [circle,draw,fill=blue!20] {};
\node (v9) at (2,0) [circle,draw,fill=blue!20] {};
\node (v10) at (4,0) [circle,draw,fill=blue!20] {};
\node (v11) at (6,0) [circle,draw,fill=blue!20] {};
\node (v12) at (8,0) [circle,draw,fill=orange] {};

\node (v13) at (14,0) [circle,draw,fill=blue] {};
\node (v14) at (16,0) [circle,draw,fill=violet] {};
\node (v15) at (18,0) [circle,draw,fill=blue!20] {};
\node (v16) at (20,0) [circle,draw,fill=blue!20] {};
\node (v17) at (22,0) [circle,draw,fill=blue!20] {};

\draw (v1) -- (v0);
\draw (v2) -- (v1);
\draw (v3) -- (v2);
\draw (v4) -- (v3);
\draw (v5) -- (v4);
\draw (v6) -- (v5);
\draw (v7) -- (v6);

\draw (v11) -- (v1);

\draw (v9) -- (v8);
\draw (v10) -- (v9);
\draw (v11) -- (v10);
\draw (v12) -- (v11);

\draw (v14) -- (v6);

\draw (v14) -- (v13);
\draw (v15) -- (v14);
\draw (v16) -- (v15);
\draw (v17) -- (v16);

\node (w0) at (4,6) [circle,draw,fill=green] {};
\node (w1) at (6,6) [circle,draw,fill=violet] {};
\node (w2) at (8,6) [circle,draw] {};
\node (w3) at (10,6) [circle,draw] {};
\node (w4) at (12,6) [circle,draw] {};
\node (w5) at (14,6) [circle,draw] {};
\node (w6) at (16,6) [circle,draw] {};
\node (w7) at (18,6) [circle,draw,fill=orange] {};

\node (w8) at (0,4) [circle,draw,fill=red!20] {};
\node (w9) at (2,4) [circle,draw,fill=red!20] {};
\node (w10) at (4,4) [circle,draw,fill=red!20] {};
\node (w11) at (6,4) [circle,draw,fill=red!20] {};
\node (w12) at (8,4) [circle,draw,fill=orange] {};

\node (w13) at (14,4) [circle,draw,fill=red] {};
\node (w14) at (16,4) [circle,draw,fill=violet] {};
\node (w15) at (18,4) [circle,draw,fill=red!20] {};
\node (w16) at (20,4) [circle,draw,fill=red!20] {};
\node (w17) at (22,4) [circle,draw,fill=red!20] {};

\draw (w1) -- (w0);
\draw (w2) -- (w1);
\draw (w3) -- (w2);
\draw (w4) -- (w3);
\draw (w5) -- (w4);
\draw (w6) -- (w5);
\draw (w7) -- (w6);

\draw (w11) -- (w1);

\draw (w9) -- (w8);
\draw (w10) -- (w9);
\draw (w11) -- (w10);
\draw (w12) -- (w11);

\draw (w14) -- (w6);

\draw (w14) -- (w13);
\draw (w15) -- (w14);
\draw (w16) -- (w15);
\draw (w17) -- (w16);

\node (u0) at (4,10) [circle,draw,fill=green] {};
\node (u1) at (6,10) [circle,draw,fill=violet] {};
\node (u2) at (8,10) [circle,draw] {};
\node (u3) at (10,10) [circle,draw] {};
\node (u4) at (12,10) [circle,draw] {};
\node (u5) at (14,10) [circle,draw] {};
\node (u6) at (16,10) [circle,draw] {};
\node (u7) at (18,10) [circle,draw,fill=orange] {};

\node (u8) at (0,8) [circle,draw,fill=red!20] {};
\node (u9) at (2,8) [circle,draw,fill=red!20] {};
\node (u10) at (4,8) [circle,draw,fill=red!20] {};
\node (u11) at (6,8) [circle,draw,fill=red!20] {};
\node (u12) at (8,8) [circle,draw,fill=orange] {};

\node (u13) at (14,8) [circle,draw,fill=red] {};
\node (u14) at (16,8) [circle,draw,fill=violet] {};
\node (u15) at (18,8) [circle,draw,fill=red!20] {};
\node (u16) at (20,8) [circle,draw,fill=red!20] {};
\node (u17) at (22,8) [circle,draw,fill=red!20] {};

\draw (u1) -- (u0);
\draw (u2) -- (u1);
\draw (u3) -- (u2);
\draw (u4) -- (u3);
\draw (u5) -- (u4);
\draw (u6) -- (u5);
\draw (u7) -- (u6);

\draw (u11) -- (u1);

\draw (u9) -- (u8);
\draw (u10) -- (u9);
\draw (u11) -- (u10);
\draw (u12) -- (u11);

\draw (u14) -- (u6);

\draw (u14) -- (u13);
\draw (u15) -- (u14);
\draw (u16) -- (u15);
\draw (u17) -- (u16);

\node (o1) at (-4,0) [circle,draw,fill=violet] {};
\node (o2) at (-8,0) [circle,draw,fill=violet] {};
\node (p1) at (-4,2) [circle,draw,fill=red!20] {};
\node (p2) at (-8,2) [circle,draw,fill=red!20] {};
\node (q1) at (-4,4) [circle,draw,fill=orange] {};
\node (q2) at (-8,4) [circle,draw,fill=orange] {};

\node (o3) at (-10,0) [circle,draw,fill=blue] {};

\node (m) at (-2,0) [circle,draw,fill=blue!20] {};
\node (m2) at (-6,0) [circle,draw,fill=blue!20] {};
\draw (m) -- (v8);\draw (m2) -- (o1);\draw (m2) -- (o2);

\node (n) at (-2,2) [circle,draw,fill=red!20] {};
\node (n1) at (-2,4) [circle,draw,fill=red] {};
\node (n2) at (-2,8) [circle,draw,fill=red] {};
\node (n3) at (-10,8) [circle,draw,fill=red!20] {};
\node (n4) at (-10,2) [circle,draw,fill=red!20] {};

\draw (m) -- (o1);

\draw (o2) -- (o3);

\draw (q1) -- (p1);
\draw (q2) -- (p2);
\draw (o1) -- (p1);
\draw (o2) -- (p2);

\draw (n2) -- (n3);\draw (n3) -- (n4); \draw (n4) -- (p2);
\draw (n2) -- (u8);
\draw (n1) -- (w8);\draw (n) -- (n1);\draw (n) -- (p1);

\draw [dashed, ultra thick] (-11,-1)--(-1,-1)--(-1,9)--(-11,9)--(-11,-1);

\end{tikzpicture}
\end{center}
\caption{The AND vertex gadget} \label{fig_and}
\end{figure}

The AND vertex gadget is the fragment enclosed by the dashed rectangle in Figure \ref{fig_and}, and is connecting to two incoming red edge gadgets and one incoming blue edge gadget. It can be checked that at any time, the AND vertex gadget can either lock up the blue inside the gadget and let go two red, which in turn allow two red edge gadgets pointing outwards; or lock up two red inside the gadget and let the blue go, which allow the blue edge gadgets pointing outwards. This simulates the logic of AND vertex in \textsc{Ncl} graphs.

Given an instance of planar \textsc{Ncl} graph with two configurations $f$ and $g$, we can construct the two corresponding configurations $\sigma$ and $\sigma'$ of an instance of the \textsc{Fas} problem by putting together the edge gadgets and vertex gadgets mentioned above. By the construction, $\sigma$ and $\sigma'$ are connected in the graph \textsf{FS}$(X,Y)$ if and only if $f$ can be changed to $g$ by edge flips. This completes the reduction. Finally, note that the location graph $X$ (i.e. all the edge gadgets and vertex gadgets) is planar with maximum degree $3$, so the proof is complete.   \end{proof}

\noindent\textbf{Remark.} It is also natural to consider a variant of \textsc{Fas} problem: given a configuration $\sigma$, a particular people $p$ and a target location $t$, is it possible for $p$ to reach location $t$ starting from its current location in $\sigma$? By reduction from the \textit{configuration-to-edge} variant of \textsc{Ncl} problem \cite{hd05}, our method can also prove the \textsf{PSPACE}-completeness of this variant of \textsc{Fas} problem.


\section*{Acknowledgements}
The first author was supported by the Research Fund of Guangdong University of Foreign Studies (Nos. 297-ZW200011 and 297-ZW230018), and the National Natural Science Foundation of China (No. 61976104).


\end{document}